\numberwithin{equation}{section} \theoremstyle{plain}
\newtheorem{thm}{Theorem}[section]
\newtheorem{lem}[thm]{Lemma}
\newtheorem{conj}[thm]{Conjecture}
\newtheorem*{acknow}{Acknowledgments}
\def\<{\langle}
\def\>{\rangle}
\def\({\left(}
\def\){\right)}
\def\[{\left[}
\def\]{\right]}
\title{A note on the Chern Conjecture in dimension four}
\author[F.G. Li]{Fagui Li}
\address{School of Mathematical Sciences, Laboratory of Mathematics and Complex Systems, Beijing Normal University, Beijing 100875, P.R. CHINA.}
\email{faguili@mail.bnu.edu.cn}
\subjclass[2010]{15A45, 15B57, 53C42.}
\date{}
\keywords{Chern Conjecture, minimal hypersurfaces,  scalar curvature,  spheres.}
\begin{document}
\maketitle

\begin{abstract}
Let $M^4$  be a  closed immersed  minimal  hypersurface with constant squared length of the second fundamental form $S$ and constant 3-mean curvature  $H_3$ in $\mathbb{S}^{5}$. If
$H_3^2\leq \frac{1}{2}
$ and  Gauss-Kronecker curvature  $K_M$  satisfies  $K_M\leq1$ $($or $  K_M\leq\frac{S^2}{144}$$)$,  then $M^4$  is isoparametric.
\end{abstract}

\section{Introduction}
More than 50 years ago, S. S. Chern \cite{Chern68, Chern do Carmo Kobayashi 1970}  proposed the following famous  and original   conjecture:
\begin{conj} \label{Conjecture SSChern 1965}
Let $M^n$ be a  closed  immersed  minimal  hypersurface of the unit sphere $\mathbb{S}^{n+1}$ with constant scalar curvature $R_M$. Then for each $n$, the set of all possible values for $R_M$  is discrete.
\end{conj}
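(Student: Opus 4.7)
The strategy is to prove that constancy of $R_M$ forces $M^n$ to be isoparametric, after which Münzner's classification of closed isoparametric hypersurfaces in $\mathbb{S}^{n+1}$ pins down a discrete set of admissible values of $R_M$. In outline: constancy of $R_M$ $\Rightarrow$ constancy of $S = |A|^2$ $\Rightarrow$ constancy of every higher moment $f_k = \tr(A^k)$ $\Rightarrow$ constancy of the principal curvatures $\Rightarrow$ isoparametric $\Rightarrow$ discreteness.

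The entry point is Simons' identity $\tfrac12 \Delta S = |\nabla A|^2 + S(n-S)$ for a minimal hypersurface. Because the Gauss equation gives $R_M = n(n-1) - S$, the hypothesis that $R_M$ is constant is equivalent to $S$ being constant, so the identity collapses to the pointwise equality $|\nabla A|^2 = S(S-n)$. Hence either $S = 0$ (the totally geodesic case, giving the single value $R_M = n(n-1)$) or $S \geq n$ with $|\nabla A|$ itself constant. I would then attempt to propagate constancy to each higher moment $f_k$ via a Peng--Terng style hierarchy: derive Simons-type elliptic identities for $f_3, f_4, \ldots$, integrate them over $M^n$, and use the constancies already extracted at the previous stage to kill gradient terms one order at a time. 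Once every $f_k$ is constant, the coefficients of the characteristic polynomial of $A$ are constant, so the principal curvatures are (up to permutation) constant on $M^n$, i.e.\ $M^n$ is isoparametric.

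At that point I would invoke Münzner's theorem, which restricts the number $g$ of distinct principal curvatures of a closed isoparametric hypersurface in $\mathbb{S}^{n+1}$ to $g \in \{1,2,3,4,6\}$. Combined with the complete classification due to Cartan, Münzner, Abresch, Stolz, Cecil--Chi--Jensen, Immervoll and Chi, this leaves only finitely many congruence classes for each fixed $n$, and hence only finitely --- therefore discretely --- many values of $S$, and of $R_M$.

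The hard step is the propagation from $f_2$ to the $f_k$ with $k \geq 3$. There is no known general mechanism that pushes constancy of $S$ on a minimal hypersurface of arbitrary dimension to constancy of every higher symmetric function of the principal curvatures; this is in fact the entire substance of the Chern conjecture and the reason it remains open beyond $n = 2, 3$. Partial results invariably supply the missing control by hand --- for instance via pinching of $S$ (Chern--do Carmo--Kobayashi, Peng--Terng), via dimension-specific algebraic identities, or, as in the present paper's $n=4$ result, by additionally assuming that $H_3$ is constant and imposing bounds on the Gauss--Kronecker curvature, which jointly pin down the missing $f_3$ and $f_4$ information. A general attack would plausibly require either a genuinely new integral identity, a refined Simons-type equation whose nonlinear terms can be closed without pinching, or a parabolic method preserving constancy of $S$ while regularising the higher moments.
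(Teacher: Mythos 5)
This statement is Conjecture~\ref{Conjecture SSChern 1965}: the paper presents it as an open conjecture and offers no proof of it, so there is nothing in the source to compare your argument against. Your proposal is a strategy outline rather than a proof, and you correctly identify where it breaks down. The chain ``$R_M$ constant $\Rightarrow$ $S$ constant $\Rightarrow$ $|\nabla A|^2 = S(S-n)$'' is sound (Gauss equation plus Simons' identity), and the final step is also sound: once $M^n$ is isoparametric, M\"unzner's restriction $g\in\{1,2,3,4,6\}$ together with the fact that a closed minimal isoparametric hypersurface has $S=(g-1)n$ already yields discreteness, without even needing the full classification. But the middle step --- propagating constancy from $f_2$ to the higher moments $f_3,\dots,f_n$ --- is not a step that can be ``derived'' by integrating Simons-type identities for $f_k$; those identities (e.g.\ the Peng--Terng formulas \eqref{A-2B} and \eqref{laplacian of f4} used in this paper) contain the cubic contractions $\mathscr{A}$ and $\mathscr{B}$, which cannot be controlled by the lower-order constancies alone. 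Integrating them produces inequalities (pinching gaps near $S=n$), not the identity $f_k\equiv\mathrm{const}$. This is exactly why the present paper must \emph{assume} $H_3$ constant (fixing $f_3$) and impose bounds on $K_M$ (controlling $f_4$) to reach the isoparametric conclusion in dimension four via Theorem~\ref{Theorem Tang Yan ISOPARAMETRIC HYPERSURFACES non-negative scalar curvature}.

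One further caution: even granting that the ``isoparametric $\Rightarrow$ discrete'' endgame works, reducing discreteness of $R_M$ to isoparametricity proves the \emph{stronger} Conjecture~\ref{Conjecture strong version of Chern Conjecture}, which is strictly harder than the statement you were asked to prove. In principle the weak form could admit an analytic argument (showing the set of achievable constant values of $S$ has no accumulation point) that never classifies the hypersurfaces; no such argument is known, but your plan forecloses that route by aiming directly at the strong form. As written, the proposal is an accurate survey of the state of the art with an honestly acknowledged, and genuinely unfilled, gap; it is not a proof.
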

With the development of the study, mathematicians  realized the importance of Conjecture \ref{Conjecture SSChern 1965} and  proposed the following stronger version. Up to now, it is so far from a complete solution of this problem and S. T. Yau raised it again as the 105th problem in his Problem Section \cite{Yau  1982}.  Please see the excellent and detailed surveys on this topic 
by Scherfner-Weiss \cite{M Scherfner S Weiss 2008}, Scherfner-Weiss-Yau \cite{M Scherfner S Weiss and  Yau 2012} and Ge-Tang \cite{Ge Tang 2012}.
\begin{conj} \label{Conjecture strong version of Chern Conjecture}{\rm\textbf{$($Chern Conjecture$)$}}
Let $M^n$ be a  closed  immersed minimal hypersurface of the unit sphere $\mathbb{S}^{n+1}$ with constant scalar curvature. Then $M^n$ is isoparametric.
\end{conj}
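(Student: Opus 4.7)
The plan is to follow the Simons--Chern--do Carmo--Kobayashi framework combined with the integral method of Cheng--Yau and Peng--Terng. First, since minimality together with constant scalar curvature forces the squared norm $S := |A|^2$ of the second fundamental form to be constant, Simons' identity
\[
\tfrac{1}{2}\Delta S \;=\; |\nabla A|^2 \;+\; S(n-S) \;-\; \sum_{i,j,k,l} h_{ij}h_{jk}h_{kl}h_{li}
\]
collapses to a pointwise algebraic relation expressing $|\nabla A|^2$ as a polynomial in the principal curvatures $\lambda_1,\ldots,\lambda_n$. This already recovers the classical gap $S=0$ or $S\geq n$, with equality characterising the Clifford tori and setting the stage for the refined analysis to follow.

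Next, I would apply the Cheng--Yau self-adjoint operator $\square u = \sum_{i,j}(nH\delta_{ij}-h_{ij})\,u_{,ij}$ to the Newton power sums $f_k := \sum_i \lambda_i^k$ for $k=3,4,\ldots$, and integrate by parts on the closed manifold $M^n$. Each equation $\int_M \square f_k = 0$ yields a relation among the (now constant) symmetric functions $f_2 = S,\, f_3,\, f_4,\ldots$ and the $L^2$-norms of $\nabla A$ and $\nabla^2 A$. The strategy is to iterate this system far enough to force the $\lambda_i$ to take only finitely many constant values with constant multiplicities---precisely the defining property of an isoparametric hypersurface---after which M\"unzner's classification into great spheres, Clifford tori, and the Cartan and Ferus--Karcher--M\"unzner families identifies $M^n$ with one of the standard examples.

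A subsidiary step is a \emph{gap argument} in the spirit of Peng--Terng: one would show that no intermediate constant value of $S$ beyond the admissible spectrum $\{0,\,n,\,2n,\ldots\}$ is consistent with the integral identities above, thereby reducing the problem to verifying that each admissible value is geometrically realised by an isoparametric example. In dimensions $n=3$ this programme has already been completed by Deng--Gu--Wei, and partial progress in dimension $n=4$ is what the main theorem of the present paper adds to the literature.

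The principal obstacle---and the reason Conjecture~\ref{Conjecture strong version of Chern Conjecture} has resisted proof for more than fifty years---is that the integral system produced by the $\square$-operator method fails to close in dimension $n\geq 4$: each new identity introduces a covariant derivative $\nabla^k A$ of one order higher without an accompanying a priori pointwise or $L^2$ bound. Bridging this gap appears to require either a fundamentally new Bochner-type inequality controlling $\|\nabla^2 A\|_{L^2}$ by $S$ alone, or a quantitative sharpening of the Peng--Terng gap theorem that rules out every intermediate value of $S$ in a single stroke. This is precisely why the theorem announced in the abstract must impose the auxiliary constancy of the $3$-mean curvature $H_3$ together with a pointwise bound on the Gauss--Kronecker curvature $K_M$: those hypotheses serve as a surrogate for the missing higher-order control, and devising a strategy that dispenses with them entirely remains the decisive open problem.
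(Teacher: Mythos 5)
The statement you were asked about is Conjecture~\ref{Conjecture strong version of Chern Conjecture}, which is an \emph{open conjecture}: the paper does not prove it, and explicitly says it is far from a complete solution. Your text is therefore not comparable to any proof in the paper, and indeed it is not a proof at all --- it is a survey of the standard strategy (Simons' identity, the Cheng--Yau operator applied to the power sums $f_k$, Peng--Terng-type gap arguments) that ends by conceding the decisive gap yourself, namely that the integral system fails to close for $n\geq 4$ because each new identity introduces higher covariant derivatives of $A$ without an accompanying bound. That concession is accurate, and it is precisely why the paper's actual theorem (Theorem~\ref{thm introduction Chern conjecture for 4-dim}) adds the auxiliary hypotheses of constant $H_3$ and a pointwise bound on $K_M$; but a document that identifies an unbridged gap is a research programme, not a proof, and should not be presented as one.

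Two factual corrections within your sketch. First, your version of Simons' identity carries a spurious term $-\sum_{i,j,k,l}h_{ij}h_{jk}h_{kl}h_{li}$: for a minimal hypersurface of $\mathbb{S}^{n+1}$ the identity is
\begin{equation*}
\tfrac{1}{2}\Delta S=|\nabla A|^2+S(n-S),
\end{equation*}
exactly as used in the paper's equation~(\ref{equation Simons equation 1}); the quartic trace does not appear. Second, the $n=3$ case of the Chern conjecture was settled by Chang \cite{Chang 1993} (building on de Almeida--Brito \cite{Almeida and Brito 1990}), not by Deng--Gu--Wei, whose contribution \cite{Deng Gu and  Wei  2017} is the Willmore ($H_3\equiv 0$) minimal case in $\mathbb{S}^5$. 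Also, the conjectured admissible spectrum of $S$ is $\{0,n,2n,3n,5n\}$, corresponding to minimal isoparametric hypersurfaces with $g\in\{1,2,3,4,6\}$ distinct principal curvatures, rather than the arithmetic progression $\{0,n,2n,\ldots\}$ you wrote.
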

The problem of classification for isoparametric hypersurfaces in spheres began in 1930 by Cartan 
and was finished by many mathematicians until 2020 (cf. Cecil-Chi-Jenson \cite{CCJ07}, Chi \cite{Chi11,Chi13,Chi16}, Dorfmeister-Neher \cite{Dorfmeister and  Neher 1985}, Immervoll \cite{Immervoll 2008} and Miyaoka \cite{Miy13,Miy16}, etc.), please see the elegant book \cite{CR15} and  survey \cite{Chi19} for more details.
 In 1968, J. Simons \cite{Simons68}  showed the following  theorem: 
\begin{thm}{\rm\textbf{$($Simons inequality$)$}}\label{thm J. Simons}
Let $M^n$ 
be a  closed immersed minimal hypersurface of the unit sphere $\mathbb{S}^{n+1}$  with the squared length of the second fundamental form $S$. 
Then
$$
\int_{M}S\left( S-n\right)\geq0.
$$
In particular, if $0\leq S\leq n$, one has either  $S\equiv0$ or $S\equiv n$  on $M^n$.
\end{thm}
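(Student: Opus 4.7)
The plan is to prove Simons' inequality via a pointwise Bochner-type identity (the celebrated \emph{Simons identity}) for the squared norm $S$ of the second fundamental form, followed by integration over the closed manifold $M^n$.

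First, I fix a local orthonormal frame $\{e_1,\ldots,e_n\}$ on $M^n$, let $A=(h_{ij})$ denote the components of the second fundamental form of the immersion $M^n\hookrightarrow \mathbb{S}^{n+1}$, so $S=\sum_{i,j}h_{ij}^2$, and denote by $h_{ij,k}$, $h_{ij,kl}$ its first and second covariant derivatives. The principal technical step is to derive the identity
$$\tfrac{1}{2}\Delta S \;=\; |\nabla A|^2 + nS - S^2 \;=\; |\nabla A|^2 + S(n-S).$$
This comes from expanding $\tfrac{1}{2}\Delta S = |\nabla A|^2 + \sum_{i,j}h_{ij}\Delta h_{ij}$, then using the Codazzi equation $h_{ij,k}=h_{ik,j}$ to swap the first two covariant derivatives of $h_{ij}$, and finally commuting covariant derivatives at the cost of curvature terms. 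The ambient curvature of $\mathbb{S}^{n+1}$ produces the $nS$ contribution, while the intrinsic curvature of $M^n$, re-expressed through the Gauss equation in terms of $A$, produces the $-S^2$ contribution, once minimality $\sum_i h_{ii}=0$ has been used to kill the mean-curvature-dependent terms. This derivation is the main technical obstacle: it is algebraically routine but requires careful index bookkeeping to collect the correct coefficients.

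Next, since $M^n$ is closed, Stokes' theorem gives $\int_M \Delta S\, dV = 0$. Integrating the Simons identity over $M^n$ therefore yields
$$\int_M S(S-n)\,dV \;=\; \int_M |\nabla A|^2\,dV \;\geq\; 0,$$
which is exactly the stated inequality.

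Finally, under the pinching $0\le S\le n$, the pointwise inequality $S(S-n)\le 0$ combined with the integral identity above forces both $S(S-n)\equiv 0$ and $|\nabla A|^2\equiv 0$ on $M^n$. At every point one therefore has $S\in\{0,n\}$; by continuity of $S$ and connectedness of $M^n$, either $S\equiv 0$ (so $M^n$ is totally geodesic, an equatorial sphere) or $S\equiv n$ (yielding a Clifford minimal hypersurface). The parallelism $\nabla A\equiv 0$ obtained in the equality case is not needed for the present statement but is the starting point for finer rigidity results.
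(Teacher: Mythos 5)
Your proposal is correct and is exactly the classical argument: the Simons identity $\tfrac12\Delta S=|\nabla A|^2+S(n-S)$ integrated over the closed manifold, which is precisely the identity the paper itself invokes (equation (2.5), with $n=4$) and for which it simply cites Simons' original work rather than giving a proof. The rigidity step via continuity and connectedness in the pinched case $0\le S\le n$ is also handled properly, so there is nothing to correct.
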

 The classification
  of  $S\equiv n$  in Theorem \ref{thm J. Simons}  was characterized by Chern-do Carmo-Kobayashi \cite{Chern do Carmo Kobayashi 1970} and Lawson \cite{Lawson 1969}  independently:  \emph{The Clifford tori are the only closed minimal hypersurfaces in $\mathbb{S}^{n+1}$  with 
  $S \equiv n$, i.e.,
  $
  S^{k}(\sqrt{\frac{k}{n}})\times S^{n-k}(\sqrt{\frac{n-k}{n}}), \ \  1\leq k\leq n-1.
  $}
  For a closed immersed minimal hypersurface in $\mathbb{S}^{n+1}$,
  notice that 
  $$
  S=n(n-1)-R_M,
  $$
 by  the Gauss and Codazzi equations.
Hence, Simons inequality gave the first pinching gap of Conjecture \ref{Conjecture SSChern 1965}.

 In 1983, Peng and Terng \cite{Peng and Terng 1983,Peng and Terng1 1983} made the first breakthrough towards Chern  Conjecture \ref{Conjecture SSChern 1965}, they proved:
\emph{If $S > n$, then $S > n+ \frac{1}{12n}$. Moreover, for $n = 3$,  $S \geq  6$ if $S > 3$. } 
In 1993, Chang \cite{Chang 1993}  completed the proof of Chern  Conjecture \ref{Conjecture strong version of Chern Conjecture} for $n=3$. Next,
  Yang-Cheng \cite{Cheng Qing Ming and H Yang 1998}  and  
  Suh-Yang  \cite{Suh Yang2007}  improved the second pinching constant from $\frac{1}{12n}$ to $\frac{3n}{7}$. However, it is still an open problem for higher dimensional case that if $S\equiv {\rm Constant} > n$, then $S \geq 2n$? 
  
If  $S\not\equiv {\rm Constant}$, then the problem becomes more difficult.
 Peng and Terng \cite{Peng and Terng 1983,Peng and Terng1 1983} obtained 
 that 
 \emph{  there exists a positive constant 
   $\delta(n)$ depending only on $n$, 
   such that if $n\leq S\leq n+\delta(n)$, $n\leq 5$, 
   then $S\equiv n$.}
Later, Cheng and Ishikawa \cite{Cheng Qing Ming and Ishikawa1999} improved the previous pinching constant for $n\leq 5$,  Wei-Xu \cite{Si Ming Wei  Hong Wei Xu2007} extended the result to $n = 6, 7$ and  Zhang \cite{Qin Zhang 2010}  promoted it to $n\leq8$. 
Finally,   Ding-Xin \cite{Qi Ding  Y.L. Xin. 2011} proved  all the dimensions, in particular,  they showed that if  
the dimension is $n\geq6$, then the pinching constant $\delta(n)=\frac{n}{23}$.  Afterthat,  Xu-Xu \cite{Xu H. Xu 2017} improved it to $\delta(n)=\frac{n}{22}$ and  Li-Xu-Xu \cite{Li Lei Hongwei Xu  Zhiyuan Xu 2017} showed $\delta(n)=\frac{n}{18}$.  Actually, due to some counterexamples of Otsuki \cite{Otsuki 1970}, 
  the condition $S\geq n$ is essential in the pinching results above. 
 Very recently,  using the height functions of the normal vector field (cf. \cite{Ge Li 2020 Integral Einstein,Nomizu and Smyth 1969}),  Ge-Li \cite{Ge Li 2020 A lower bound second fundamental form} proved that \emph{ there is a positive constant
  $\delta(n)>0$ depending only on $n$ such that on any closed embedded, non-totally geodesic, minimal hypersurface $M^n$ in $\mathbb{S}^{n+1}$,
 $\int_{M}S \geq \delta(n){\rm Vol}(M^n)$}.

Lately,  de Almeida-Brito-Scherfner-Weiss \cite{Almeida  Brito Scherfner and  Weiss 2018} showed that   \emph{ $M^n$ $(n\geq4)$  is isoparametric if
it is  a closed, minimally immersed hypersurface of $\mathbb{S}^{n+1}$  with constant Gauss-Kronecker curvature and it has three pairwise distinct principal curvatures everywhere. }
 For the case that $n=4$,  Tang and Yang \cite{Tang and Yang 2018} proved that,  \emph{ if $R_M\geq0 $,  $H_3$ and  the number of distinct principal curvatures $g$ are constant,  then $M^4$ is isoparametric.}
 Deng-Gu-Wei \cite{Deng Gu and  Wei  2017} proved that if  \emph{ $M^4$ is a closed Willmore minimal hypersurfaces  with
constant scalar curvature in $\mathbb{S}^5$, then it is isoparametric.}  In other words, they dropped the non-negativity assumption of the scalar curvature  under the condition  $H_3\equiv0$.  

A recent great  progress of  
 Tang-Wei-Yan  \cite{TWY18} and  Tang-Yan \cite{TY20} generalized the theorem of de Almeida and Brito \cite{Almeida and Brito 1990} for $n = 3$ to any dimension $n$, strongly supporting Chern Conjecture \ref{Conjecture strong version of Chern Conjecture}.
  Note that the scalar curvature $R_M\geq0$ for all isoparametric hypersurfaces  and it can be found in 
  \cite{TY20}. 
\begin{thm}{\rm\textbf{$($Tang and Yan \label{Theorem Tang Yan ISOPARAMETRIC HYPERSURFACES non-negative scalar curvature}\cite{TY20}$)$}}
Let $M^n\ (n\geq4) $ be a 
closed immersed hypersurface  in  $\mathbb{S}^{n+1}$. If the following conditions are satisfied:
\begin{itemize}
\item[(i)]  $\sum_{i=1}^{n}\lambda_i^k$
$(k=1,\cdots,n-1)$ are constants for principal curvatures $\lambda_1,\lambda_2,\cdots,\lambda_n$;
\item[(ii)] $R_M\geq0$;
\end{itemize}
then $M^n$ is isoparametric. Moreover, if $M^n$ has $n$ distinct principal curvatures somewhere, then $R_M \equiv0$.
\end{thm}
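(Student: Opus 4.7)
The statement reduces, via Newton's identities $k\,e_k=\sum_{i=1}^{k}(-1)^{i-1}e_{k-i}\,p_i$, to the single assertion that the Gauss--Kronecker curvature $K_M=e_n=\prod_i\lambda_i$ is constant. Indeed, condition (i) is equivalent to the constancy of the elementary symmetric polynomials $e_1,\dots,e_{n-1}$ of the principal curvatures; once $e_n$ is constant too, the characteristic polynomial of the shape operator has constant coefficients, forcing $M^n$ to be isoparametric. Since on a hypersurface in $\mathbb{S}^{n+1}$ one has $R_M=n(n-1)+e_1^2-p_2$ by the Gauss equation, condition (i) already forces $R_M$ to be constant, so assumption (ii) merely says this constant is nonnegative.

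My plan is to generalize the divergence-form argument of de Almeida--Brito (for $n=3$) by producing an identity whose integral over the closed manifold forces $\nabla e_n=0$. On the dense open subset where the multiplicities of the principal curvatures are locally constant, I would choose a local orthonormal frame $\{E_i\}$ diagonalizing the shape operator $A$. Differentiating the $n-1$ relations $e_k=\mathrm{const}$ yields a linear system in the derivatives $\lambda_{i,j}:=\nabla_j\lambda_i$ which, when combined with the Codazzi equation $h_{ij,k}=h_{ik,j}$, severely constrains these quantities.

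Next I would construct a vector field $X$ built from the adjugate matrix $\mathrm{adj}(A)$ contracted against $\nabla A$, and expand $\mathrm{div}\,X$. Using the constancy of $e_1,\dots,e_{n-1}$ together with Codazzi, I would aim to rewrite $\mathrm{div}\,X$ as the sum of a nonnegative quadratic form in the $\lambda_{i,j}$ and a term of the form $R_M\cdot\Phi$ with $\Phi\geq 0$. Integration over the closed $M^n$ gives $\int_M\mathrm{div}\,X=0$, and the hypothesis $R_M\geq 0$ forces both terms to vanish pointwise; hence $\lambda_{i,j}\equiv 0$ and $K_M$ is constant. For the \emph{moreover} assertion, having already established that $M^n$ is isoparametric, I would invoke Münzner's theorem: the number $g$ of distinct principal curvatures is a global constant lying in $\{1,2,3,4,6\}$. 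If $g=n$ occurs somewhere, then $g=n$ everywhere and (since $n\geq 4$) necessarily $n\in\{4,6\}$ with all multiplicities equal to one. A direct inspection of the explicit Cartan--Münzner examples in these two cases then shows $R_M\equiv 0$.

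The central obstacle is the construction of the right divergence identity. For $n=3$ the cofactor tensor reduces essentially to $(\det A)\,A^{-1}$, which renders the algebra transparent; for general $n$, expanding $\mathrm{div}\bigl(\mathrm{adj}(A)\cdot\nabla A\bigr)$ produces a proliferation of cross terms, and the cancellations required to isolate a clean quadratic in $\nabla\lambda$ depend delicately on Newton-type identities in tandem with the full Codazzi symmetry. A secondary, largely routine difficulty is extending the conclusion $\nabla e_n=0$ from the multiplicity-stratified open set across the lower-dimensional closed subset where the stratification fails, which is standardly handled by a density argument.
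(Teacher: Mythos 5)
First, note that the paper you are reading does not prove this statement at all: it is quoted verbatim from Tang--Yan \cite{TY20} and used as a black box, so there is no internal proof to compare against. Judged on its own, your submission is a strategy outline rather than a proof, and the missing piece is exactly the substance of the theorem. Your reduction is fine: by Newton's identities condition (i) is equivalent to the constancy of $\sigma_1,\dots,\sigma_{n-1}$, the Gauss equation shows $R_M$ is then automatically constant, and constancy of $\sigma_n$ would make the characteristic polynomial of the shape operator constant, hence $M^n$ isoparametric. The ``moreover'' part is also essentially correct: for an isoparametric hypersurface $g$ is globally constant, M\"unzner gives $g\in\{1,2,3,4,6\}$ with principal curvatures $\cot(\theta+\frac{(k-1)\pi}{g})$ and alternating multiplicities, so $g=n\ (n\geq4)$ forces $(g;m_1,m_2)=(4;1,1)$ or $(6;1,1)$, and the identities $\sum_k\cot(\theta+\tfrac{k\pi}{g})=g\cot(g\theta)$, $\sum_k\cot^2(\theta+\tfrac{k\pi}{g})=g^2\cot^2(g\theta)+g(g-1)$ give $R_M=n(n-1)+\sigma_1^2-p_2\equiv0$; one does not even need the explicit examples, only M\"unzner's structure theorem.

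The genuine gap is the core step, which you yourself flag as ``the central obstacle'': you assume that $\operatorname{div}\bigl(\mathrm{adj}(A)\cdot\nabla A\bigr)$ (or some variant) can be rewritten, using Codazzi and the constancy of $\sigma_1,\dots,\sigma_{n-1}$, as a nonnegative quadratic form in the $\lambda_{i,j}$ plus a term $R_M\cdot\Phi$ with $\Phi\geq0$, and that integration then kills $\nabla\lambda$. No such identity is exhibited, no sign analysis is done, and it is far from clear that the na\"ive adjugate construction produces the required decomposition: this is precisely what separates de Almeida--Brito's $n=3$ case (where $\mathrm{adj}(A)$ is quadratic in $A$ and the algebra closes) from the general case, which remained open for thirty years until Tang--Wei--Yan \cite{TWY18} and Tang--Yan \cite{TY20} resolved it by a considerably more delicate argument (careful stratification by multiplicities, regularity of the principal curvature functions across the strata, and integral inequalities that are not a one-line divergence of an adjugate). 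Relatedly, your final ``routine density argument'' is not routine: the candidate vector field built from eigenvalue derivatives is only defined on the open set where multiplicities are locally constant, and controlling it (and the divergence theorem) across the set where eigenvalues collide is one of the technical points the published proof has to work for. As written, the proposal restates the theorem as a hoped-for identity rather than proving it.
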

As an application of Theorem \ref{Theorem Tang Yan ISOPARAMETRIC HYPERSURFACES non-negative scalar curvature} in dimension four, 
we remove the condition of the scalar curvature $R_M\geq 0$, but we have some requirements for the  Gauss-Kronecker curvature $K_M$ and 3-mean curvature $H_3$. 
\begin{thm}\label{thm introduction Chern conjecture for 4-dim}
Let $M^4$  be a  closed immersed  minimal  hypersurface with constant scalar curvature $R_M$ and constant 3-mean curvature  $H_3$ in $\mathbb{S}^{5}$. If
$H_3^2\leq\frac{1}{2}
$ and Gauss-Kronecker curvature  $K_M$  satisfies   $  K_M\leq1$ $($or $   K_M\leq\frac{S^2}{144}$$)$,  then $M^4$   is isoparametric.
\end{thm}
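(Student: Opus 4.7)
The plan is to deduce the theorem from the Tang--Yan result (Theorem~\ref{Theorem Tang Yan ISOPARAMETRIC HYPERSURFACES non-negative scalar curvature}) by verifying that $M^4$ satisfies both of its hypotheses: (i) $p_k := \sum_{i=1}^{4} \lambda_i^k$ is constant for $k = 1,2,3$, and (ii) $R_M \geq 0$. Condition~(i) follows from a short calculation with Newton's identities. Minimality yields $p_1 = 0$; the hypothesis that $S$ is constant (equivalent to $R_M$ constant, since $S = n(n-1) - R_M = 12 - R_M$) gives $p_2 = S$ constant; and Newton, together with $p_1 = 0$, specializes to $p_3 = 3 e_3 = 3\binom{4}{3} H_3 = 12 H_3$, which is constant by hypothesis.

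Condition~(ii), $R_M \geq 0$, is equivalent to $S \leq 12$ and is the substantive part of the argument. The key algebraic input is the further Newton identity, which on a minimal hypersurface simplifies to
\begin{equation*}
p_4 = \frac{S^2}{2} - 4 K_M.
\end{equation*}
The hypothesis $K_M \leq 1$ therefore forces $p_4 \geq \tfrac{S^2}{2} - 4$, while $K_M \leq S^2/144$ gives the sharper $p_4 \geq \tfrac{17}{36} S^2$. To convert such a lower bound on $p_4$ into an upper bound on $S$, I would combine it with the Cauchy--Schwarz estimate $p_3^2 \leq S \cdot p_4$, i.e.\ $144 H_3^2 \leq S p_4$, and the hypothesis $H_3^2 \leq \tfrac{1}{2}$, and then appeal to Simons-type identities available for closed minimal hypersurfaces. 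Specifically, $S$ being constant forces the pointwise equality $|\nabla A|^2 = S(S-4)$, and analogous Bochner--Simons formulas for $\tr(A^3)$ and $\tr(A^4)$ provide further integral identities on the closed manifold $M^4$ which, after inserting the pinching bounds on $H_3$ and $K_M$, should force $S \leq 12$.

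The main obstacle lies precisely in this pinching step. Purely algebraic inequalities for four real numbers summing to zero do not suffice: the family $(a,\, a^{-1},\, -a,\, -a^{-1})$ has $\sum \lambda_i = 0$, $H_3 = 0$, and $K_M = 1$ for every $a > 0$, while $S = 2(a^2 + a^{-2})$ is unbounded. Thus the proof must essentially exploit that $M^4$ is a closed minimally immersed hypersurface in $\mathbb{S}^5$, through the Simons identity and derived integral formulas on the compact manifold, to rule out such configurations. Once $R_M \geq 0$ is established, Theorem~\ref{Theorem Tang Yan ISOPARAMETRIC HYPERSURFACES non-negative scalar curvature} applies and concludes that $M^4$ is isoparametric.
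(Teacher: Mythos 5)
Your reduction is the same as the paper's: verify the hypotheses of the Tang--Yan theorem, where (i) is immediate from $H_1\equiv0$, $S$ constant, $f_3=12H_3$ constant, and the whole content is to prove $R_M\geq0$, i.e.\ $S\leq12$. You also correctly diagnose, via the family $(a,a^{-1},-a,-a^{-1})$, that no pointwise algebraic inequality in the $\lambda_i$ can give this, so some genuinely analytic input is required. But that is exactly where your proposal stops: the phrase ``should force $S\leq12$'' stands in for the entire substantive argument, and the mechanism you gesture at does not work as stated. The Cauchy--Schwarz inequality $f_3^2\leq S\,p_4$ combined with a lower bound on $p_4=\tfrac{S^2}{2}-4K_M$ only produces an upper bound on $f_3^2$ in terms of $S$; it gives no upper bound on $S$, so inserting $H_3^2\leq\tfrac12$ and $K_M\leq1$ into it yields nothing. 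Likewise, unspecified ``integral Simons-type identities for $\tr(A^3)$ and $\tr(A^4)$'' are not enough: integrating the Peng--Terng identity $\tfrac14\Delta f_4=(4-S)f_4+2\mathscr{A}+\mathscr{B}$ over $M$ leaves the sign-indefinite cubic terms $\mathscr{A},\mathscr{B}$ uncontrolled without pointwise information on $\nabla h$.

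The paper's actual argument is a pointwise maximum-principle analysis at the point $p$ where $K_M$ attains its supremum. Because $f_1,f_2,f_3$ are constant and $\nabla K_M(p)=0$, one gets a linear system on the $h_{iik}$ which forces $h_{iik}=0$ (or partial vanishing) at $p$ whenever enough principal curvatures are distinct; this lets one compute $\mathscr{A}$, $\mathscr{B}$ and hence $\Delta K_M(p)$ explicitly in terms of $S$, $f_3$, $K_M(p)$ and the off-diagonal $h_{ijk}$, using also $|\nabla h|^2=S(S-4)$ from the Simons identity. One then runs a five-case analysis on the multiplicities of the principal curvatures at $p$: the umbilic and two-two cases are handled directly ($S\le 4$, or Deng--Gu--Wei when $f_3\equiv0$); the $(3,1)$ case uses $f_3^2=576\lambda^6$; the case of three distinct curvatures needs Lemma 2.2 plus a contradiction argument to show $K_M(p)\geq0$ before estimating $S\leq12$; and the case of four distinct curvatures uses Lemma 2.1, i.e.\ the inequality $48K_M(x_{\max})(S-6)\geq S^2(S-10)+6f_3^2$. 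It is in this case analysis, and in the hypotheses $K_M\leq1$ (or $K_M\leq S^2/144$) and $H_3^2\leq\tfrac12$ entering each branch, that the bound $S\leq12$ is actually extracted; none of that machinery, nor any substitute for it, appears in your proposal, so the proof has a genuine gap at its central step.
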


\section{Proof of Theorem \ref{thm introduction Chern conjecture for 4-dim}} 
In this section, we will prove Theorem \ref{thm introduction Chern conjecture for 4-dim}. 
Let $M^n$  be a  closed immersed  minimal  hypersurface  in the unit sphere $\mathbb{S}^{n+1}$ and denote by  $h$  the second fundamental
form of  hypersurface with respect to the unit normal vector field  $\nu$.
If $\left\lbrace \omega_1, \omega_2, \omega_3, \omega_4\right\rbrace$ is a smooth orthonormal coframe field, then $h$ can be written as
$$
h=\sum_{i,j}h_{ij}\omega_i\otimes\omega_j.
$$
The covariant derivative $\nabla h$ with components $h_{ijk}$ is given by
$$
\sum_{k}h_{ijk}\omega_k=dh_{ij}+\sum_{k}h_{kj}\omega_{ik}+\sum_{k}h_{ik}\omega_{jk},
$$
and  $\left\lbrace \omega_{ij}\right\rbrace $
is the connection forms of $M^4$ with respect to  $\left\lbrace \omega_1, \omega_2, \omega_3, \omega_4\right\rbrace $, which  satisfy the following structure equations:
$$
d\omega_i=-\sum_{j}\omega_{ij}\wedge\omega_j,\ \  \omega_{ij}+\omega_{ji}=0,
$$ 
$$
d\omega_{ij}=-\sum_{k}\omega_{ik}\wedge\omega_{kj}+\frac{1}{2}\sum_{k,l}R_{ijkl}\omega_{k}\wedge\omega_{l},
$$
where $R_{ijkl}$ is  the coefficients of the Riemannian curvature tensor on $M^4$.  We have the Gauss and Codazzi euqations:
$$
R_{ijkl}=\delta_{ik}\delta_{jl}-\delta_{il}\delta_{jk}+h_{ik}h_{jl}-h_{il}h_{jk},
$$
and 
$$
h_{ijk}=h_{ikj}.
$$
It is a well-known fact that the dual $(1,1)$ tensor $A$ (shape operator) of  $h$  is a self-adjoint linear operator in each tangent plane $T_pM$ and  its eigenvalues $\lambda_1(p), \lambda_2(p),\dots, \lambda_n(p)$ are the principal curvatures.  
 Associated to the shape operator $A$ there are $n$  algebraic invariants given by
$$
\sigma_r(p) = \sigma_r(\lambda_1(p), \lambda_2(p),\dots, \lambda_n(p)),\ \ 1 \leq r \leq n,
$$
where $\sigma_r : \mathbb{R} ^n \to \mathbb{R} $ is the elementary symmetric functions in $ \mathbb{R} $ given by
$$
\sigma_r(x_1,\cdots,x_n)=\sum_{i_1<i_2<\dots< i_r}x_{i_1} x_{i_2}\cdots x_{i_r}.
$$
Observe that the characteristic polynomial of $A$ can be writen as
$$
\det(\lambda I_n - A) =\sum_{r=0}^{n}
(-1)^r\sigma_r \lambda^{n-r}.
$$
The $r$-mean curvature $H_r$ of the hypersurface is then defined by 
\begin{equation}\label{equation H and sigma}
\binom{n}{r} H_r = \sigma_r.
\end{equation}
Suppose 
$$
f_k={\rm Tr}(A^k),
$$
by $n=4$,  $H_1=0$, $\sigma_4=K_M$ and $f_2={\rm Tr}(A^2)=\|h\|^2=S$ we have
\begin{eqnarray} \label{equation f1234}
\left\{ 
\begin{array}{lll} 
f_1 &=& \sigma_1=nH_1=0 \\ 
f_2 &=& \sigma_1^2-2\sigma_2=S\\
f_3 &=& \sigma_1^3-3\sigma_1\sigma_2+3\sigma_3=3\sigma_3\\
f_4 &=& \sigma_1^4-4\sigma_1^2\sigma_2+4\sigma_1\sigma_3+2\sigma_2^2-4\sigma_4=\frac{S^2}{2}-4K_M.\\
\end{array} 
\right. 
\end{eqnarray} 

\begin{lem}\label{lem  4 distinct principal curvatures}
Let $M^4$ be a  closed immersed minimal  hypersurface in $\mathbb{S}^{5}$ with constant scalar curvature $R_M\not=6$ and  constant 3-mean curvature $H_3$ $($or equivalently  $f_3$ is constant$)$. If there are  4 distinct principal curvatures at the minimmum point and maximum point of $K_M$, then Gauss-Kronecker curvature $K_M$  satisfies
\begin{equation}\label{equation KM 4 distinct principal curvatures}
 \sup_{x\in M^4}K_M(x)\geq 
 \frac{S^2(S-10)+6f_3^2}{48(S-6)}\geq 
\inf_{x\in M^4}K_M(x).
\end{equation}
\end{lem}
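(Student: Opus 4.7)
The plan is to work at the critical points (maximum and minimum) of $K_M$, extracting algebraic relations from $\nabla K_M = 0$ together with the constancy of $S$ and $f_3$.

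First, choose a local orthonormal frame diagonalizing $h$ at the critical point. Constancy of $f_2$, $f_3$ and the critical condition $\nabla K_M = 0$ (equivalent to $\nabla f_4 = 0$, since $f_4 = S^2/2 - 4K_M$) produce the four identities $\sum_i \lambda_i^k h_{ii,p} = 0$ for $k = 0, 1, 2, 3$. Under the hypothesis of four distinct principal curvatures, the Vandermonde matrix is invertible and forces $h_{ii,p} = 0$. Since $h_{ij,k}$ is fully symmetric by Codazzi, only the four components $u_m := h_{ijk}$ with $\{i,j,k,m\} = \{1,2,3,4\}$ can be nonzero. The pointwise Simons identity $|\nabla h|^2 = S(S-4)$ (valid for constant $S$) then gives $6 \sum_m u_m^2 = S(S-4)$, and the pointwise consequence of $\Delta f_3 = 0$ gives $\sum_m \lambda_m u_m^2 = -(S-4) f_3/4$.

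Next, I would compute $\Delta K_M$ using the Simons-type identity
\[
\Delta f_4 = 4(n-S) f_4 + 4 \sum_{ijp}(\lambda_i^2 + \lambda_i\lambda_j + \lambda_j^2) h_{ij,p}^2.
\]
At the critical point the vanishing of $h_{ii,p}$ collapses the quadratic sum into $S^2(S-4)/2 - 2\sum_m u_m^2 \lambda_m^2$, and substituting $f_4 = S^2/2 - 4K_M$ yields
\[
\Delta K_M = 4(4-S)K_M + 2 \sum_m u_m^2 \lambda_m^2.
\]
At the maximum, $\Delta K_M \leq 0$ produces $\sum_m u_m^2 \lambda_m^2 \leq 2(S-4) K_M^{\max}$; at the minimum, the reverse inequality.

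The main obstacle is the final algebraic step. The quantity $\sum_m u_m^2 \lambda_m^2$ is not determined by the two linear moment constraints on $u_m^2$, so one must invoke the positivity $u_m^2 \geq 0$ together with the closure relation $P(\lambda_m) = 0$, where $P(x) = x^4 - (S/2)x^2 - (f_3/3) x + K_M$ is the depressed quartic whose roots are the $\lambda_m$. This closure expresses $\sum_m u_m^2 \lambda_m^4$ in terms of the lower-order moments and $K_M$, and combining it with the two linear moment identities and the sign of $\Delta K_M$ should, after careful elimination, yield the explicit bound $F = (S^2(S-10) + 6 f_3^2)/(48(S-6))$; the factor $S-6$ appearing in the denominator precisely matches the standing hypothesis $R_M = 12 - S \neq 6$.
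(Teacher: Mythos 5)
Your set-up reproduces the paper's argument faithfully up to the key formula: at an extremum of $K_M$ the constancy of $f_1,f_2,f_3$ plus $\nabla f_4=0$ and the Vandermonde argument kill all $h_{iik}$, only the totally off-diagonal components $u_m$ survive, $6\sum_m u_m^2=S(S-4)$, and your Laplacian identity for $f_4$ correctly collapses to $\Delta K_M=4(4-S)K_M+2C$ with $C=\sum_m u_m^2\lambda_m^2$ (this is exactly the paper's $-\Delta K_M=4(S-4)K_M-2C$). The gap is precisely at the step you flag as ``the main obstacle,'' and the tools you propose there cannot close it. The quantity $C$ is a linear functional of the four nonnegative unknowns $u_m^2$, which are constrained only by the two linear moment identities you have (from $\Delta S=0$ and $\Delta f_3=0$); hence $C$ ranges over a nondegenerate interval, and positivity plus the closure relation $P(\lambda_m)=0$ does not pin it down --- that relation merely trades $C$ for the new unknown $\sum_m u_m^2\lambda_m^4$ and adds no information. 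An inequality such as $C\le 2(S-4)K_M(x_{\max})$ combined with only a lower bound for $C$ over the feasible set will not produce the sharp constant $\frac{S^2(S-10)+6f_3^2}{48(S-6)}$.

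What is missing is an additional \emph{exact pointwise identity} that determines $C$: the Peng--Terng relation
\begin{equation*}
\mathscr{A}-2\mathscr{B}=Sf_4-f_3^2-S^2,\qquad \mathscr{A}=\sum_{i,j,k}h_{ijk}^2\lambda_i^2,\ \ \mathscr{B}=\sum_{i,j,k}h_{ijk}^2\lambda_i\lambda_j .
\end{equation*}
This does not follow from the traces $\Delta S=0$, $\Delta f_3=0$ that you use; it comes from the full vanishing Hessians of $S$ and $f_3$ contracted against $A^2$ and $A$ respectively, together with the Ricci identity $h_{iikk}=h_{kkii}+(\lambda_i-\lambda_k)(1+\lambda_i\lambda_k)$, which produces the algebraic right-hand side $Sf_4-f_3^2-S^2$. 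At the extremum, where $\mathscr{A}=\frac13S^2(S-4)-2C$ and $\mathscr{B}=-\frac16S^2(S-4)+2C$, this identity evaluates $C$ exactly as $6C=\frac16S^3-\frac53S^2+4SK_M+f_3^2$, and only then does $\Delta K_M(x_{\max})\le 0$, $\Delta K_M(x_{\min})\ge 0$ give $48(S-6)K_M(x_{\max})\ge S^2(S-10)+6f_3^2\ge 48(S-6)K_M(x_{\min})$, from which \eqref{equation KM 4 distinct principal curvatures} follows after dividing by $S-6$ (where one also uses $S\ne 6$, i.e.\ $R_M\ne 6$). Without this extra identity (or an equivalent second-order input), your elimination scheme cannot reach the stated bound.
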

\begin{proof}
Set $x_{max}\in M^4$ and $x_{min}\in M^4$ such that
$$
K_M(x_{max})=\sup_{x\in M^4}K_M(x),\ \ K_M(x_{min})=\inf_{x\in M^4}K_M(x).
$$
At point $p$ ($p=x_{max}$ or  $p=x_{min}$),  we can take orthonormal frames such that
$h_{ij}=\lambda_i\delta_{ij}$ 
for all $i,j$. Thus at this point, we have
\begin{eqnarray*} 
\left\{ 
\begin{array}{lll} 
\sum_{i=1}^{4}h_{iik} &= 0 &\\ 
\sum_{i=1}^{4}\lambda_ih_{iik} &=0& \\
\sum_{i=1}^{4}\lambda_i^2h_{iik} &=0& \\
\sum_{i=1}^{4}\lambda_i^3h_{iik} &=0.&
\end{array} 
\right. 
\end{eqnarray*} 

The first, second and third   equations hold because $f_1, f_2$ and $f_3$ are constant. The fourth one comes from the fact that $p$ is an extreme point of $K_M$. Then $h_{iik}=0$ by $\lambda_i\neq\lambda_j$  $\left( i\neq j\right) $ at $p$. 
Since $f_3$ is constant and due to Peng and Terng \cite{Peng and Terng 1983,Peng and Terng1 1983}, one has
\begin{equation}\label{A-2B}
\mathscr{A}-2\mathscr{B}=Sf_4-f_3^2-S^2,
\end{equation}
and
\begin{equation}\label{laplacian of f4}
\frac{1}{4}\Delta f_4
= (4-S)f_4+2\mathscr{A}+\mathscr{B},
\end{equation}
where 
$$\mathscr{A}=\sum_{i,j,k}h_{ijk}\lambda_i^2,\ \  \mathscr{B}=\sum_{i,j,k}h_{ijk}\lambda_i\lambda_j.$$
In addition, due to $S$ is constant and by Simons' identity \cite{Simons68}  we obtain \begin{equation}\label{equation Simons equation 1}
0=\frac{1}{2}\Delta S=|\nabla h|^2+S(4-S),
\end{equation}
where 
$|\nabla h|^2=\sum_{i,j,k}h_{ijk}^2$.
 Since $h_{iik}=0$ for all $i,k$ at $p$ and  let 
 $$
 C=\lambda_1^2h_{234}^2+\lambda_2^2h_{134}^2+\lambda_3^2h_{124}^2+\lambda_4^2h_{123}^2,
 $$
  we can directly calculate
\begin{equation}\label{3A-6B}
\begin{aligned}
3(\mathscr{A}-2\mathscr{B})
&=\sum_{i,j,k}h_{ijk}^2\left(\left( \lambda_i^2+\lambda_j^2+\lambda_k^2\right) -2\lambda_i\lambda_j-2\lambda_i\lambda_k-2\lambda_j\lambda_k \right) \\
&=\sum_{i,j,k}h_{ijk}^2\left(2\left( \lambda_i^2+\lambda_j^2+\lambda_k^2\right) -\left(\lambda_i+\lambda_j+\lambda_k \right) ^2 \right) \\
&=6\left( h_{123}^2(2S-3\lambda_4^2)+h_{124}^2(2S-3\lambda_3^2)+h_{134}^2(2S-3\lambda_2^2)+h_{234}^2(2S-3\lambda_1^2)\right) \\
&=2S|\nabla h|^2-18C\\
&=2S^2(S-4)-18C.
\end{aligned}
\end{equation}
Similarly
\begin{equation}\label{A}
\begin{aligned}
\mathscr{A}
&=\frac{1}{3}\sum_{i,j,k}h_{ijk}^2\left( \lambda_i^2+\lambda_j^2+\lambda_k^2\right)  \\
&=2\left( h_{123}^2(S-\lambda_4^2)+h_{124}^2(S-\lambda_3^2)+h_{134}^2(S-\lambda_2^2)+h_{234}^2(S-\lambda_1^2)\right) \\
&=\frac{1}{3}S|\nabla h|^2-2C\\
&=\frac{1}{3}S^2(S-4)-2C.
\end{aligned}
\end{equation}
By (\ref{3A-6B}) and (\ref{A}), we have
\begin{equation}\label{B}
\mathscr{B}=-\frac{1}{6}S^2(S-4)+2C.
\end{equation}
Due to  $f_4 =\frac{S^2}{2}-4K_M$ by (\ref{equation f1234}) and 
 (\ref{A-2B})-(\ref{B}),  we have
\begin{equation*}
18\Delta K_M(p)+48K_M(p)(S-6)=S^2(S-10)+6f_3^2.
\end{equation*}
The maximum principle implies that
$$
\Delta K_M(x_{max})\leq0,\ \ 
\Delta K_M(x_{min})\geq0.
$$
Hence
\begin{equation*}\label{equation isopermetric KMax} 
48K_M(x_{max})(S-6)\geq S^2(S-10)+6f_3^2
\geq 
48K_M(x_{min})(S-6).
\end{equation*}
 Specially, 
  if  $0\leq S< 6$, then $K_M(x_{max})\leq K_M(x_{min})$, we have $K_M$ is constant and $M^4$ is isoparametric, i.e., $S=0$ or $S=4$. 
The proof is complete. 
\end{proof}

\begin{lem}\label{lem  3 distinct principal curvatures}
Let $M^4$ be a  closed immersed minimal  hypersurface in $\mathbb{S}^{5}$ with constant scalar curvature $R_M$ and  constant 3-mean curvature $H_3$ $($or equivalently  $f_3$ is constant$)$. If there exists a point $p\in M^4$ with three distinct principal curvatures, then Gauss-Kronecker curvature $K_M$  satisfies
\begin{equation}\label{equation Delta K_M 3 distinct principal curvature}
-\Delta K_M(p)=
4(S-4)K_M(p)-2C_1+2(6\lambda^2-S)\left(h_{111}^2+h_{112}^2 \right),
\end{equation}
where 
$C_1=\lambda_1^2h_{234}^2+\lambda_2^2h_{134}^2+\lambda_3^2\left( h_{124}^2+h_{114}^2\right) +\lambda_4^2\left( h_{123}^2+h_{113}^2\right) $,
 $\lambda_1(p)=\lambda_2(p)=\lambda$,  $\lambda_3(p)=\mu-\lambda$ and $\lambda_4(p)=-\mu-\lambda$.
\end{lem}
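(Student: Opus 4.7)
The plan is to run the same Peng--Terng machinery that drove Lemma \ref{lem  4 distinct principal curvatures}, but to replace the four-distinct-curvatures simplification $h_{iik}=0$ by the weaker constraints that survive when two principal curvatures coincide. The three master identities to carry over are Simons' formula $|\nabla h|^{2}=S(S-4)$ (since $S$ is constant), the algebraic identity $\mathscr{A}-2\mathscr{B}=Sf_{4}-f_{3}^{2}-S^{2}$, and the Laplacian formula $\tfrac{1}{4}\Delta f_{4}=(4-S)f_{4}+2\mathscr{A}+\mathscr{B}$. Because $f_{4}=\tfrac{S^{2}}{2}-4K_{M}$ and $S$ is constant, the last identity becomes $-\Delta K_{M}(p)=(4-S)f_{4}+2\mathscr{A}+\mathscr{B}$, and the whole task reduces to computing $2\mathscr{A}+\mathscr{B}$ at $p$.

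Next I would exploit the hypotheses. Pick an orthonormal frame at $p$ diagonalizing the shape operator with $\lambda_{1}=\lambda_{2}=\lambda$, $\lambda_{3}=\mu-\lambda$, $\lambda_{4}=-\mu-\lambda$ (minimality forces this parametrization). Constancy of $f_{1},f_{2},f_{3}$ produces, for every index $k$, the three linear relations $\sum_{i}h_{iik}=0$, $\sum_{i}\lambda_{i}h_{iik}=0$, $\sum_{i}\lambda_{i}^{2}h_{iik}=0$. Since the three principal curvatures are distinct ($\mu\neq 0$ and $\mu\neq\pm 2\lambda$), the corresponding $3\times 3$ Vandermonde-type system on $(h_{11k}+h_{22k},\,h_{33k},\,h_{44k})$ is invertible, and an elementary elimination gives $h_{33k}=h_{44k}=0$ and $h_{22k}=-h_{11k}$ for every $k$. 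Combined with the full symmetry of $h_{ijk}$ (Codazzi plus $h_{ij}=h_{ji}$), this vanishes every component with two or more indices equal to $3$ or to $4$, and relates $h_{111}=-h_{122}$, $h_{222}=-h_{112}$, $h_{113}=-h_{223}$, $h_{114}=-h_{224}$.

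The third step is a direct calculation. Using the symmetric-averaging identities $3\mathscr{A}=\sum h_{ijk}^{2}(\lambda_{i}^{2}+\lambda_{j}^{2}+\lambda_{k}^{2})$ and $6(2\mathscr{A}+\mathscr{B})=\sum h_{ijk}^{2}\bigl[3(\lambda_{i}^{2}+\lambda_{j}^{2}+\lambda_{k}^{2})+(\lambda_{i}+\lambda_{j}+\lambda_{k})^{2}\bigr]$, I would enumerate the surviving components and expand. The crucial algebraic simplifications are $2\lambda^{2}+(\mu-\lambda)^{2}+(\mu+\lambda)^{2}=S$ and $2\lambda^{2}+\mu^{2}=S/2$, together with $(\lambda_{i}+\lambda_{j}+\lambda_{k})^{2}=\lambda_{l}^{2}$ whenever $\{i,j,k,l\}=\{1,2,3,4\}$ (from $\sum\lambda_{i}=0$). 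After grouping, the coefficients of $h_{134}^{2},h_{234}^{2}$ collapse to $\lambda^{2}$, those of $h_{113}^{2}+h_{123}^{2}$ to $(\mu+\lambda)^{2}=\lambda_{4}^{2}$, and those of $h_{114}^{2}+h_{124}^{2}$ to $(\mu-\lambda)^{2}=\lambda_{3}^{2}$, which assemble precisely into $-2C_{1}$. The leftover contributions from $h_{111}^{2}$ and $h_{112}^{2}$ combine, after subtracting the $\frac{S^{2}(S-4)}{2}$ piece from $(4-S)f_{4}$ and applying Simons $|\nabla h|^{2}=S(S-4)$, into exactly $2(6\lambda^{2}-S)(h_{111}^{2}+h_{112}^{2})$.

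The main obstacle is the bookkeeping in the third step: without the vanishing of all $h_{iik}$ available in Lemma \ref{lem  4 distinct principal curvatures}, one has to juggle eight independent $h_{ijk}$-squares with their symmetry multiplicities $(1,3,6)$, and it is only the two arithmetic identities $2\lambda^{2}+\mu^{2}=S/2$ and $|\nabla h|^{2}=S(S-4)$ that make the cross-terms in $h_{113},h_{114}$ outside of $C_{1}$ cancel cleanly. Once this cancellation is verified, the statement $-\Delta K_{M}(p)=4(S-4)K_{M}(p)-2C_{1}+2(6\lambda^{2}-S)(h_{111}^{2}+h_{112}^{2})$ follows by substitution into $-\Delta K_{M}(p)=(4-S)f_{4}+2\mathscr{A}+\mathscr{B}$.
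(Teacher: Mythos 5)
Your proposal is correct and follows essentially the same route as the paper: the Peng--Terng identities $\tfrac14\Delta f_4=(4-S)f_4+2\mathscr{A}+\mathscr{B}$ and $|\nabla h|^2=S(S-4)$, the relations $h_{33k}=h_{44k}=0$, $h_{22k}=-h_{11k}$ forced by constancy of $f_1,f_2,f_3$ at a point with exactly three distinct curvatures, and then term-by-term expansion in the diagonalizing frame; your claimed coefficients (the $-2C_1$ block and the $2(6\lambda^2-S)(h_{111}^2+h_{112}^2)$ remainder) check out. The only difference is cosmetic: you evaluate $2\mathscr{A}+\mathscr{B}$ in one symmetrized pass, whereas the paper computes $\mathscr{A}$ and $3(\mathscr{A}-2\mathscr{B})$ separately and then solves for $\mathscr{B}$.
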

\begin{proof}
At point $p\in M^4$,  we can take orthonormal frames such that
$h_{ij}=\lambda_i\delta_{ij}$ 
for all $i,j$. Thus at this point, we have
\begin{eqnarray*} 
\left\{ 
\begin{array}{lll} 
\sum_{i=1}^{4}h_{iik} &= 0 &\\ 
\sum_{i=1}^{4}\lambda_ih_{iik} &=0& \\
\sum_{i=1}^{4}\lambda_i^2h_{iik} &=0&.
\end{array} 
\right. 
\end{eqnarray*} 

The first, second and third   equations hold because $f_1, f_2$ and $f_3$ are constant.
Then  for all $1\leq k\leq 4$, one has
\begin{equation}\label{equation hiik 3 distinct principal curvatures}
h_{11k}=-h_{22k},\  \  h_{33k}=h_{44k}=0,
\end{equation}
by $\lambda_i\neq\lambda_j$  $\left( 2\leq i\neq j \leq4 \right) $ at $p$. 
Let $
C_2=h_{234}^2+h_{123}^2+ h_{124}^2+h_{134}^2+h_{114}^2 +h_{113}^2 $, by (\ref{equation hiik 3 distinct principal curvatures})  we have
\begin{equation}\label{equation 3A-6B 3 distinct principal curvatures}
\begin{aligned}
&\ \ \ \ 3(\mathscr{A}-2\mathscr{B})\\
&=\sum_{i,j,k}h_{ijk}^2\left(2\left( \lambda_i^2+\lambda_j^2+\lambda_k^2\right) -\left(\lambda_i+\lambda_j+\lambda_k \right) ^2 \right) \\
&=6\left( h_{123}^2(2S-3\lambda_4^2)+h_{124}^2(2S-3\lambda_3^2)+h_{134}^2(2S-3\lambda_2^2)+h_{234}^2(2S-3\lambda_1^2)\right)+ \\
&\ \ \ \ 
h_{111}^2(6\lambda^2-9\lambda^2)+
3\left(h_{112}^2(6\lambda^2-9\lambda^2) +h_{113}^2(2S-3\lambda_4^2)
+h_{114}^2(2S-3\lambda_3^2)\right)+ \\
&\ \ \ \
h_{222}^2(6\lambda^2-9\lambda^2)+
3\left(
h_{221}^2(6\lambda^2-9\lambda^2) +h_{223}^2(2S-3\lambda_4^2)
+h_{224}^2(2S-3\lambda_3^2)\right)
\\
&=12SC_2-18C_1-12\lambda^2\left( h_{111}^2+h_{112}^2\right),
\end{aligned}
\end{equation}
and
\begin{equation}\label{equation A 3 distinct principal curvatures}
\begin{aligned}
\mathscr{A}
&=\frac{1}{3}\sum_{i,j,k}h_{ijk}^2\left( \lambda_i^2+\lambda_j^2+\lambda_k^2\right)  \\
&=2\left( h_{123}^2(S-\lambda_4^2)+h_{124}^2(S-\lambda_3^2)+h_{134}^2(S-\lambda_2^2)+h_{234}^2(S-\lambda_1^2)\right) +\\
&\ \ \ \ 
\lambda^2h_{111}^2+3\lambda^2h_{112}^2+
h_{113}^2\left( S-\lambda_4^2\right) +
h_{114}^2\left( S-\lambda_3^2\right) +\\
&\ \ \ \ 
\lambda^2h_{222}^2+3\lambda^2h_{221}^2+
h_{223}^2\left( S-\lambda_4^2\right) +
h_{224}^2\left( S-\lambda_3^2\right) \\
&=2SC_2 -2C_1+4\lambda^2\left( h_{111}^2+h_{112}^2\right).
\end{aligned}
\end{equation}
By (\ref{equation 3A-6B 3 distinct principal curvatures}) and (\ref{equation A 3 distinct principal curvatures}), we have
\begin{equation}\label{equation B 3 distinct principal curvatures}
\mathscr{B}=-SC_2+2C_1+4\lambda^2\left(h_{111}^2 +h_{112}^2\right) .
\end{equation}
By (\ref{equation Simons equation 1}) and (\ref{equation hiik 3 distinct principal curvatures})  we obtain
 \begin{equation}\label{equation covariant derivative of h}
\begin{aligned}
|\nabla h|^2
&=S(S-4)=\sum_{i,j,k}h_{ijk}^2\\
&=6\left( h_{234}^2+h_{123}^2+ h_{124}^2+h_{134}^2\right) +
3\left( h_{112}^2 +h_{113}^2 +h_{114}^2 \right) +
\\
&\ \ \ \ 
3\left( h_{221}^2 +h_{223}^2 +h_{224}^2 \right) +h_{111}^2 +h_{222}^2 \\
&=6C_2+4\left(h_{111}^2 +h_{112}^2\right) .
\end{aligned}
\end{equation}
By (\ref{equation A 3 distinct principal curvatures}),  (\ref{equation B 3 distinct principal curvatures}) and (\ref{equation covariant derivative of h}),  we have
\begin{equation}\label{equation A no C2 3 distinct principal curvatures}
\mathscr{A}=\frac{1}{3}S^2(S-4)-2C_1+4\left( \lambda^2-\frac{S}{3}\right) \left(h_{111}^2 +h_{112}^2\right),
\end{equation}
and
\begin{equation}\label{equation B no C2 3 distinct principal curvatures}
\mathscr{B}=-\frac{1}{6}S^2(S-4)
+2C_1+2\left(2 \lambda^2+\frac{S}{3}\right) \left(h_{111}^2 +h_{112}^2\right) .
\end{equation}
Due to  (\ref{equation f1234}), (\ref{laplacian of f4}), (\ref{equation A no C2 3 distinct principal curvatures}) and (\ref{equation B no C2 3 distinct principal curvatures}), we have
$$
-\Delta K_M(p)=
4(S-4)K_M(p)-2C_1+2(6\lambda^2-S)\left(h_{111}^2+h_{112}^2 \right).
$$
The proof is complete.
\end{proof}

\begin{proof}[\textbf{Proof of Theorem $\mathbf{\ref{thm introduction Chern conjecture for 4-dim}}$}]
Suppose $K_M(p)=\sup_{x\in M^4}K_M(x)\in M^4$, we have just five possibilities for the principal curvatures $\lambda_1(p),\dots, \lambda_4(p)$ at  point $p\in M^n$:

\begin{enumerate} [(1)]
\item $\lambda_i(p)=\lambda_j(p)$ for all $1\leq i,j\leq4$.
\item $\lambda_1(p)=\lambda_2(p)=\lambda$ and $\lambda_3(p)=\lambda_4(p)=-\lambda$,
$$
A(p)=
\begin{pmatrix}
\lambda & 0 & 0 & 0 \\
 0 & \lambda & 0& 0\\
 0 & 0 &  -\lambda & 0\\
0 & 0& 0& -\lambda
 \end{pmatrix}.
 $$
 
 \item $\lambda_1(p)=\lambda_2(p)=\lambda_3(p)=\lambda$ and $\lambda_4(p)=-3\lambda$,
 $$
 A(p)=
 \begin{pmatrix}
 \lambda & 0 & 0 & 0 \\
  0 & \lambda & 0& 0\\
  0 & 0 &  \lambda & 0\\
 0 & 0& 0& -3\lambda
  \end{pmatrix}.
  $$
  
  \item $\lambda_1(p)=\lambda_2(p)=\lambda$,  $\lambda_3(p)=\mu-\lambda$ and $\lambda_4(p)=-\mu-\lambda$.
  $$
  A(p)=
  \begin{pmatrix}
  \lambda & 0 & 0 & 0 \\
   0 & \lambda & 0& 0\\
   0 & 0 & \mu-\lambda & 0\\
  0 & 0& 0& -\mu-\lambda
   \end{pmatrix}.
   $$
   
 \item $\lambda_i(p)\neq\lambda_j(p)$ for all $1\leq i\neq j\leq4$.
\end{enumerate}

Due to Theorem \ref{Theorem Tang Yan ISOPARAMETRIC HYPERSURFACES non-negative scalar curvature},   $M^4$ is isoparametric if $S\leq 12$. Hence,  we  just need to prove $S\leq 12$ at $p$, since $S$ is constant on $M^4$.

In the case  (1), $S\equiv 0$.

In the case  (2), due to $H_3$ is constant, also $f_3=12H_3$ is constant by (\ref{equation H and sigma}) and (\ref{equation f1234}), then 
$f_3\equiv0$,
$M^4$ is isoparametric (see Deng-Gu-Wei \cite{Deng Gu and  Wei  2017}). In fact,  $K_M\leq1$ (or $K_M\leq\frac{S^2}{144}$) implies that
$$
K_M=\lambda^4\leq1\ \ (or\ K_M=\lambda^4 \leq\frac{S^2}{144}),
$$
and
 $S=4\lambda^2\leq4$ (or $\lambda^4 \leq\frac{S^2}{144}=\frac{16\lambda^4}{144}$). Hence, $S\leq4$ in this case.

In the case  (3), 
if $f_3^2=576\lambda^6\leq 576$, then  $\lambda^6\leq 1$ and
$S=12\lambda^2\leq 12$.

In the  case  (4), 
some direct calculations show
\begin{eqnarray} \label{equation Sf3KM}
\left\{ 
\begin{array}{lll} 
S &= 4\lambda^2+2\mu^2 &\\ 
f_3&=-6\mu^2\lambda& \\
K_M &=\lambda^2(\lambda^2-\mu^2).&\\
\end{array} 
\right. 
\end{eqnarray} 
By (\ref{equation Sf3KM}), 
one has 
\begin{equation}\label{equation lambda mu}
\lambda^2=\frac{\mu^2+\sqrt{\mu^4+4K_M}}{2}\ \  or\ \ 
\lambda^2=\frac{\mu^2-\sqrt{\mu^4+4K_M}}{2}.
\end{equation}
If $K_M(p)=\lambda^2(\lambda^2-\mu^2)<0$, we have
$
0<\lambda^2<\mu^2.
$
The maximum principle implies 
$$
\Delta K_M(p)\leq0.
$$
Due to  (\ref{equation Delta K_M 3 distinct principal curvature}) by Lemma  \ref{lem  3 distinct principal curvatures} and $S>4$, one has
$$
0\leq-\Delta K_M(p)=
4(S-4)K_M(p)-2C_1+2(6\lambda^2-S)\left(h_{111}^2+h_{112}^2 \right),
$$
and
$$
0\leq C_1<(6\lambda^2-S)\left(h_{111}^2+h_{112}^2 \right)=2\left( \lambda^2-\mu^2 \right) \left(h_{111}^2+h_{112}^2 \right)\leq 0.
$$
This creates a contradiction.
Thus, $K_M(p)\geq0$.  By  (\ref{equation lambda mu}), we obtain
$$
f_3^2=36\mu^4\lambda^2=18\mu^4\left( \mu^2+\sqrt{\mu^4+4K_M}\right)\geq36\mu^6.
$$
If $f_3^2\leq\frac{576\sqrt{10}}{25}$ and  $ K_M\leq1$ $($or $  K_M\leq\frac{S^2}{144}$$)$,  then
$$
\begin{aligned}
S
&=4\lambda^2+2\mu^2= 4\mu^2+2\sqrt{\mu^4+4K_M}\leq
2\sqrt{10(\mu^4+2K_M)}\\
&\leq2\sqrt{10}\sqrt{\left( \frac{f_3^2}{36}\right)^{\frac{2}{3}} +2}\\
&=12,
\end{aligned}
$$
(or 
$
S\leq2\sqrt{10}\sqrt{\left( \frac{f_3^2}{36}\right)^{\frac{2}{3}} +\frac{S^2}{72}}\leq\sqrt{64+\frac{5S^2}{9}}
$ shows that $S\leq 12$).

In the case (5),  by   (\ref{equation KM 4 distinct principal curvatures}) in  Lemma  \ref{lem  4 distinct principal curvatures} and $K_M\leq1$ $($or $  K_M\leq\frac{S^2}{144}$$)$,  we have 
$$
\left(or\ \frac{S^2}{144}\geq  \right) 
1\geq K_M(p)\geq 
 \frac{S^2(S-10)+6f_3^2}{48(S-6)}\geq 
  \frac{S^2(S-10)}{48(S-6)},
$$
and it implies that $S\leq 12$ if $S\not= 6$.

To sum up, all the cases show that $S\leq 12$ if
$f_3^2\leq\min\{576,\frac{576\sqrt{10}}{25}\}=\frac{576\sqrt{10}}{25}$. By (\ref{equation H and sigma}) and (\ref{equation f1234}),
we have $$H_3^2=\frac{\sigma^2_3}{16}=\frac{f_3^2}{144}\leq\frac{4\sqrt{10}}{25}\approx0.5059.
$$
This completes the proof by $H_3^2\leq 0.5<0.5059$.
\end{proof}

\begin{acknow}
The author is very grateful to Professor Jianquan Ge, Professor Wenjiao Yan  and Dr. Qichao Li for their kindly encouragements and supports. 
\end{acknow}

\end{document}